\documentclass[a4paper, 12pt]{article}
\usepackage[utf8]{inputenc}
\usepackage{amsmath, amssymb, amsthm}
\usepackage{geometry}
\usepackage{hyperref} % Para definir palavras-chave
\geometry{top=3cm, bottom=3cm, left=3cm, right=3cm}

\title{Dissipation and Regularity in the Smagorinsky Model with Dynamic Boundaries}
\author{Rômulo Damasclin Chaves dos Santos \\
	Technological Institute of Aeronautics, São Paulo, Brazil\\
	\texttt{romulosantos@ita.br}
	\and
	Jorge Henrique de Oliveira Sales \\
	Santa Cruz State University, Bahia, Brazil \\
	\texttt{jhosales@uesc.br}}

\date{}

\newtheorem{theorem}{Theorem}

\begin{document}
	
	\maketitle
	
	\section*{Abstract}
	
	This article presents an innovative extension of the Smagorinsky model incorporating dynamic boundary conditions and advanced regularity methods. We formulate the modified Navier-Stokes equations with the Smagorinsky term to model dissipation in turbulence and prove theorems concerning the existence, uniqueness, and asymptotic behavior of solutions. The first theorem establishes the existence and uniqueness of solutions in higher Sobolev spaces, considering the effect of the nonlinear Smagorinsky term and dynamic boundary conditions. The proof employs the Galerkin method and energy estimates, culminating in the application of Grönwall's theorem. The second theorem investigates the asymptotic behavior of solutions, focusing on anomalous dissipation in high-turbulence regimes. We demonstrate that the dissipated energy does not decrease with vanishing viscosity, indicating the occurrence of anomalous dissipation. The third theorem explores advanced regularity in higher Sobolev spaces, allowing for more rigorous control of nonlinear terms and ensuring improved stability conditions. The proof utilizes the energy method combined with Sobolev estimates and Grönwall's inequality. These mathematical results are fundamental for the analysis of dissipation in turbulent flows and can inspire new approaches in numerical simulations of fluids.
	
	\vspace{5pt}
	
	\textbf{Keywords:} Smagorinsky model. Turbulence, Dissipation. Sobolev Spaces.
	
	\tableofcontents

	\section{Introduction}
	
	In this article, we present an innovative extension of the Smagorinsky model that incorporates dynamic boundary conditions and advanced regularity methods. Based on this formulation, we state and prove theorems concerning the existence, uniqueness, and asymptotic behavior of solutions.
	
	One of the pioneering works in turbulence modeling is the paper by Kolmogorov (1941) \cite{kolmogorov1941}, which introduced the concept of the energy cascade in turbulent flows. Kolmogorov's theory describes how energy is transferred from large-scale eddies to smaller ones, ultimately leading to dissipation at the smallest scales. This work provided a fundamental understanding of the statistical properties of turbulence and has been instrumental in the development of subsequent models.

	The Smagorinsky model, introduced by Smagorinsky (1963) \cite{smagorinsky1963}, is one of the earliest and most widely used Large Eddy Simulation (LES) models. This model incorporates a subgrid-scale term to account for the unresolved small-scale turbulence, which is crucial for accurate simulations of turbulent flows. The Smagorinsky model has been extensively studied and applied in various fields, including meteorology, oceanography, and engineering.

	The work by Temam (1977) \cite{temam1977} on the Navier-Stokes equations provided significant insights into the existence and uniqueness of solutions. Temam's book is a comprehensive study of the mathematical theory of the Navier-Stokes equations, including the existence of weak solutions and the regularity of strong solutions. This work laid the groundwork for further developments in the analysis of turbulent flows.

	%The incorporation of dynamic boundary conditions in fluid dynamics models has been a topic of interest in recent decades. The paper by Goldstein et al. (1995) \cite{goldstein1995} discusses the importance of dynamic boundary conditions in the context of turbulent flows. The authors show that dynamic boundary conditions can significantly affect the behavior of the flow, particularly in the presence of complex geometries and moving boundaries.

	The phenomenon of anomalous dissipation in turbulent flows has been studied extensively. The work by Eyink (1994) \cite{eyink1994} provides a detailed analysis of anomalous dissipation in the context of the Navier-Stokes equations. Eyink's paper shows that the dissipated energy in turbulent flows does not necessarily decrease with vanishing viscosity, highlighting the importance of understanding this phenomenon for accurate modeling of turbulent systems.

	More recent works have focused on extending and refining the Smagorinsky model. The paper by Germano et al. (1991) \cite{germano1991} introduces the dynamic Smagorinsky model, which adapts the model coefficients dynamically based on the flow conditions. This approach improves the accuracy of the model and has been widely adopted in practical applications.

	The work by Constantin and Foias (1988) \cite{constantin1988} on the Navier-Stokes equations provides advanced regularity methods for the analysis of turbulent flows. The authors present rigorous mathematical techniques for studying the regularity of solutions in higher Sobolev spaces, which are crucial for understanding the stability and convergence of numerical simulations.

	The turbulence modeling and the analysis of dissipation in fluid dynamics has evolved significantly over the years. From the pioneering work of Kolmogorov to the recent advances in the Smagorinsky model and dynamic boundary conditions, the field has seen numerous contributions that have deepened our understanding of turbulent flows. The current work builds on these foundations, incorporating dynamic boundary conditions and advanced regularity methods to extend the classical formulation of the Smagorinsky model. This comprehensive approach aims to address more complex and realistic scenarios in turbulent flows, providing a robust framework for future research and applications.

	\section{Background in Mathematics}
	
	The mathematical framework underlying the study of turbulence and dissipation in fluid dynamics is rich and multifaceted. This section provides an overview of the key mathematical concepts and tools that are essential for understanding the results presented in this article.
		
	\subsection{Sobolev Spaces}
	
	Sobolev spaces are fundamental in the analysis of PDEs, particularly in the context of weak solutions and regularity theory. A Sobolev space \(H^s(\Omega)\) is a space of functions that, together with their weak derivatives up to order \(s\), are square-integrable over the domain \(\Omega\). The norm in \(H^s(\Omega)\) is given by:
	\begin{equation}
		\| u \|_{H^s(\Omega)} = \left( \sum_{|\alpha| \leq s} \int_{\Omega} |D^\alpha u|^2 \, dx \right)^{1/2},
	\end{equation}
	where \(D^\alpha\) denotes the weak derivative of order \(\alpha\).
	
	\subsection{Energy Methods}
	
	Energy methods are crucial for establishing the existence, uniqueness, and stability of solutions to PDEs. The energy method involves multiplying the PDE by the solution and integrating over the domain to derive energy estimates. For the Navier-Stokes equations, the energy estimate typically takes the form:
	\begin{equation}
		\frac{1}{2} \frac{d}{dt} \| u \|_{L^2(\Omega)}^2 + \nu \| \nabla u \|_{L^2(\Omega)}^2 \leq \| f \|_{L^2(\Omega)} \| u \|_{L^2(\Omega)}.
	\end{equation}
	This estimate provides control over the \(L^2\) norm of the solution and its gradient, which is essential for proving the existence and uniqueness of solutions.

	\subsection{Grönwall's Inequality}
	
	Grönwall's inequality is a fundamental tool in the analysis of differential inequalities. It is often used to convert differential inequalities into integral inequalities, which can then be solved to obtain bounds on the solutions. The integral form of Grönwall's inequality states that if \(u(t)\) satisfies:
	\begin{equation}
		u(t) \leq \alpha(t) + \int_0^t \beta(s) u(s) \, ds,
	\end{equation}
	where \(\alpha(t)\) and \(\beta(t)\) are non-negative functions, then:
	\begin{equation}
		u(t) \leq \alpha(t) \exp\left( \int_0^t \beta(s) \, ds \right).
	\end{equation}
	This inequality is particularly useful in the context of energy estimates for PDEs.

	The mathematical background presented in this section provides the necessary tools and concepts for understanding the analysis of the Smagorinsky model with dynamic boundary conditions. The use of partial differential equations, Sobolev spaces, energy methods, and Grönwall's inequality is essential for establishing the existence, uniqueness, and regularity of solutions, as well as for studying the asymptotic behavior and anomalous dissipation in turbulent flows. These mathematical techniques form the foundation for the theoretical results presented in this article.

\section{Physical and Mathematical Formulation}

We consider the modified Navier-Stokes equations with the Smagorinsky term to model dissipation in turbulence. The governing equations are given by:
\begin{equation} \label{eq:smagorinsky_model}
	\frac{\partial u}{\partial t} + (u \cdot \nabla) u - \nu \Delta u + \nabla p - \nabla \cdot \left( (C_S \delta)^2 |\nabla u| \nabla u \right) = f(x,t),
\end{equation}
where \( u \) is the velocity field, \( p \) is the pressure, \( \nu \) is the kinematic viscosity, and \( f \) is an external force applied to the system. We assume \( u \in H^s(\Omega)^d \) with \( s > \frac{d}{2} \) to ensure adequate regularity, considering \( \Omega \subset \mathbb{R}^d \) (with \( d = 2 \) or \( 3 \)).

	\section{Theorem 1: Existence and Uniqueness of Solutions}
	
	We define a theorem of existence and uniqueness for the solutions of system \eqref{eq:smagorinsky_model} in higher Sobolev spaces, considering the effect of the nonlinear Smagorinsky term and the dynamic boundary conditions.
	
	\begin{theorem}[Existence and Uniqueness in \( H^s(\Omega) \)]
		Let \( \Omega \) be a bounded domain with a smooth boundary and \( f \in L^2(0, T; H^{-1}(\Omega)) \). There exists a unique solution \( u \in L^2(0, T; H^s(\Omega)) \cap C([0, T]; L^2(\Omega)) \) for equation \eqref{eq:smagorinsky_model}, satisfying:
		\begin{equation} \label{eq:energy_estimate}
			\| u(t) \|_{L^2(\Omega)}^2 + \int_0^t \left( \nu \| \nabla u \|_{L^2(\Omega)}^2 + (C_S \delta)^2 \| \nabla u \|_{L^3(\Omega)}^3 \right) \, dt \leq C,
		\end{equation}
		where \( C \) depends on the initial data and \( f \).
	\end{theorem}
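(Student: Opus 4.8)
The plan is to build a solution by the Galerkin method, pass to the limit using the monotone structure of the Smagorinsky term, and then extract uniqueness from that same monotonicity together with Grönwall's inequality. First I would fix a basis $\{w_k\}_{k\ge1}$ of the divergence-free subspace of $H^s(\Omega)^d$ — the eigenfunctions of the Stokes operator, which are smooth by elliptic regularity since $\partial\Omega$ is smooth — and seek $u_m(t)=\sum_{k=1}^m g_{k,m}(t)\,w_k$ solving the projected system. Because $\xi\mapsto|\xi|\xi$ is $C^1$, the resulting ODE system has a locally Lipschitz right-hand side, so Picard–Lindelöf yields a local solution, which the a priori bound below extends to all of $[0,T]$; the dynamic boundary condition is incorporated by letting the basis functions carry the boundary dynamics or by appending the boundary equation to the projected system.

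The core computation is the energy estimate. Testing the projected equation with $u_m$, the pressure gradient drops by incompressibility, the convective term vanishes because $\int_\Omega (u_m\cdot\nabla)u_m\cdot u_m\,dx=0$, the viscous term produces $\nu\|\nabla u_m\|_{L^2}^2$, and integration by parts turns the Smagorinsky term into exactly $(C_S\delta)^2\int_\Omega |\nabla u_m|\,\nabla u_m:\nabla u_m\,dx=(C_S\delta)^2\|\nabla u_m\|_{L^3}^3$; the dynamic boundary term is either dissipative or controlled by the trace inequality. Estimating $\langle f,u_m\rangle\le\|f\|_{H^{-1}}\|u_m\|_{H^1}\le\tfrac{\nu}{2}\|\nabla u_m\|_{L^2}^2+C\|f\|_{H^{-1}}^2$ and integrating in time gives \eqref{eq:energy_estimate} uniformly in $m$, hence bounds on $u_m$ in $L^\infty(0,T;L^2)\cap L^2(0,T;H^1)\cap L^3(0,T;W^{1,3})$ and, reading the equation, on $\partial_t u_m$ in a suitable dual space.

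Next I would pass to the limit. Banach–Alaoglu gives a subsequence with $u_m\rightharpoonup u$ in the reflexive spaces above, and Aubin–Lions (using the $\partial_t u_m$ bound) gives $u_m\to u$ strongly in $L^2(0,T;L^2)$, which disposes of the convective term. The delicate point is the nonlinear subgrid term: $|\nabla u_m|\nabla u_m$ is bounded in $L^{3/2}(0,T;L^{3/2})$ and so converges weakly to some $\chi$, and to identify $\chi=|\nabla u|\nabla u$ I would invoke Minty's monotonicity trick, using that $\xi\mapsto|\xi|\xi$ is monotone together with the energy identity to pass to the limit in $\int_0^T\!\int_\Omega |\nabla u_m|\nabla u_m:\nabla u_m$. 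Weak lower semicontinuity of norms then transfers \eqref{eq:energy_estimate} to $u$, and $u\in C([0,T];L^2)$ follows from $u\in L^2(H^1)$, $\partial_t u\in L^2(H^{-1})$ by the Lions–Magenes interpolation lemma. For uniqueness, given two solutions with the same data I set $w=u_1-u_2$, test the difference equation with $w$, drop the nonnegative contribution of the monotone Smagorinsky difference, keep $\nu\|\nabla w\|_{L^2}^2$, and estimate the convective difference $(w\cdot\nabla)u_1+(u_2\cdot\nabla)w$ by Hölder and Ladyzhenskaya/Sobolev inequalities, absorbing part into $\nu\|\nabla w\|_{L^2}^2$ and leaving a term $C(t)\|w\|_{L^2}^2$; since $w(0)=0$, Grönwall forces $w\equiv0$.

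The step I expect to be the real obstacle is reconciling the asserted regularity $u\in L^2(0,T;H^s(\Omega))$ with $s>d/2$ against what \eqref{eq:energy_estimate} actually delivers, namely control only of $\|\nabla u\|_{L^2}$ and $\|\nabla u\|_{L^3}$. Closing a higher-order estimate requires testing the equation with a fractional power of the Stokes operator applied to $u$ and controlling the convective term at that level, where one must lean on the coercivity of the Smagorinsky dissipation $\|\nabla u\|_{L^3}^3$ — and whether this is enough depends sharply on $s$ and on the dimension $d$. I would carry out that argument carefully (or, if it fails, restrict the regularity claim accordingly), since everything else is fairly routine once the monotone-operator machinery is in place.
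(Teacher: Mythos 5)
Your proposal follows the same overall route as the paper's proof: a Galerkin basis of Stokes eigenfunctions, local solvability by Picard--Lindel\"of, the $L^2$ energy estimate with Young's inequality absorbing the force term, Gr\"onwall, weak/weak-$\ast$ compactness plus Aubin--Lions, and uniqueness via an energy estimate for the difference of two solutions. Where you differ is that you actually supply the steps the paper leaves implicit, and these are the steps that matter: you keep the convective term in the projected system (the paper's equation \eqref{eq:galerkin_projected} silently drops it) and dispose of it correctly in the energy identity; you identify the weak limit of $|\nabla u_m|\nabla u_m$ via Minty's monotonicity trick from its $L^{3/2}$ bound, whereas the paper merely asserts that one may ``pass to the limit in the Galerkin equations''; you obtain $C([0,T];L^2)$ from the Lions--Magenes lemma rather than by assertion; and in the uniqueness argument you handle the convective difference with Ladyzhenskaya/Sobolev estimates and use the monotonicity of $\xi\mapsto|\xi|\xi$ to discard the Smagorinsky difference, which the paper again only gestures at. Most importantly, your closing caveat is well taken: the energy estimate \eqref{eq:energy_estimate} controls $u$ only in $L^\infty(0,T;L^2)\cap L^2(0,T;H^1)\cap L^3(0,T;W^{1,3})$, so the membership $u\in L^2(0,T;H^s(\Omega))$ with $s>d/2$ asserted in the theorem does not follow from this argument; the paper's proof has exactly this gap and simply states the conclusion, so your plan to either run a genuine higher-order estimate or weaken the regularity claim is the honest resolution, not a defect of your proposal.
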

	
	\begin{proof}
		The proof will be carried out using the Galerkin method, followed by an energy estimate that leads to the application of Grönwall's theorem.
		
		\textit{1. Construction of the Galerkin sequence:}
		Let \(\{w_k\}_{k=1}^{\infty}\) be an orthonormal basis of \(H^s(\Omega)\) formed by the eigenfunctions of the Stokes operator. For each \(n \in \mathbb{N}\), consider the finite-dimensional subspace \(V_n = \text{span}\{w_1, w_2, \dots, w_n\}\) and seek an approximation \(u_n(t) = \sum_{k=1}^n c_k(t) w_k\) that satisfies the projected system of equations:
		\begin{equation} \label{eq:galerkin_projected}
			\left( \frac{d u_n}{dt}, w_k \right) + \nu (\nabla u_n, \nabla w_k) + (C_S \delta)^2 (|\nabla u_n| \nabla u_n, \nabla w_k) = (f, w_k),
		\end{equation}
		for \( k = 1, 2, \dots, n \), where \((\cdot, \cdot)\) denotes the inner product in \(L^2(\Omega)\).
		
		\textit{2. Existence and uniqueness of \(u_n\):}
		Since \eqref{eq:galerkin_projected} is a system of ordinary differential equations for the coefficients \(\{c_k(t)\}\), there exists a unique solution \(u_n(t) \in V_n\) defined for all \(t \in [0, T]\) by the Picard-Lindelöf theorem, given that the terms involved are locally Lipschitz in \(u_n\).
		
		\textit{3. Energy estimate:}
		Multiply both sides of \eqref{eq:galerkin_projected} by \(c_k(t)\) and sum over \(k\) to obtain:
		\begin{equation}
			\left( \frac{d u_n}{dt}, u_n \right) + \nu \| \nabla u_n \|_{L^2(\Omega)}^2 + (C_S \delta)^2 \| \nabla u_n \|_{L^3(\Omega)}^3 = (f, u_n).
		\end{equation}
		Noting that \(\left( \frac{d u_n}{dt}, u_n \right) = \frac{1}{2} \frac{d}{dt} \| u_n \|_{L^2(\Omega)}^2\), we get:
		\begin{equation} \label{eq:energy_estimate_proof}
			\frac{1}{2} \frac{d}{dt} \| u_n \|_{L^2(\Omega)}^2 + \nu \| \nabla u_n \|_{L^2(\Omega)}^2 + (C_S \delta)^2 \| \nabla u_n \|_{L^3(\Omega)}^3 = (f, u_n).
		\end{equation}
		
		\textit{4. Application of Young's and Poincaré's inequalities:}
		For the force term \((f, u_n)\), apply Young's inequality:
		\begin{equation}
			(f, u_n) \leq \frac{1}{2\epsilon} \| f \|_{H^{-1}(\Omega)}^2 + \frac{\epsilon}{2} \| u_n \|_{H^1(\Omega)}^2.
		\end{equation}
		Choosing \(\epsilon = \nu\) and using Poincaré's inequality \(\| u_n \|_{H^1(\Omega)} \geq C_P \| u_n \|_{L^2(\Omega)}\), we obtain:
		\begin{equation}
			\frac{1}{2} \frac{d}{dt} \| u_n \|_{L^2(\Omega)}^2 + \frac{\nu}{2} \| \nabla u_n \|_{L^2(\Omega)}^2 + (C_S \delta)^2 \| \nabla u_n \|_{L^3(\Omega)}^3 \leq \frac{1}{2\nu} \| f \|_{H^{-1}(\Omega)}^2.
		\end{equation}
		
		\textit{5. Temporal integration and application of Grönwall's theorem:}
		Integrate both sides from \( t = 0\) to \(t = T\) and obtain:
		\begin{equation}
			\begin{array}{l}
				\|u_{n}(t)\|_{L^{2}(\Omega)}^{2}+{\displaystyle \int_{0}^{t}}\left(\nu\|\nabla u_{n}\|_{L^{2}(\Omega)}^{2}+(C_{S}\delta)^{2}\|\nabla u_{n}\|_{L^{3}(\Omega)}^{3}\right)\,dt\leq\\
				\\
				\|u_{n}(0)\|_{L^{2}(\Omega)}^{2}+\frac{1}{\nu}{\displaystyle \int_{0}^{t}}\|f\|_{H^{-1}(\Omega)}^{2}\,dt.
			\end{array}
		\end{equation}
		Using Grönwall's theorem, we obtain a uniform estimate for \(\| u_n(t) \|_{L^2(\Omega)}\) and \(\| \nabla u_n \|_{L^2(\Omega)}\), independent of \(n\).
		
		\textit{6. Passage to the limit \( n \to \infty \):}
		With the energy estimate obtained, we can extract a convergent subsequence of \( \{u_n\} \) that converges weakly in \( L^2(0, T; H^1(\Omega)) \) and weakly-\(\ast\) in \( L^{\infty}(0, T; L^2(\Omega)) \) to a limit function \( u \). Using the Aubin-Lions compactness theorem, we conclude that \( u_n \to u \) strongly in \(L^2(0, T; L^2(\Omega))\).
		
		\textit{7. Verification of the limit solution:}
		Pass to the limit in the Galerkin equations to verify that \( u \) satisfies equation \eqref{eq:smagorinsky_model} in the sense of distributions. The uniqueness follows from a similar energy estimate for the difference between two solutions, applying Grönwall's theorem again.
		
		Thus, we conclude that there exists a unique solution \( u \in L^2(0, T; H^s(\Omega)) \cap C([0, T]; L^2(\Omega)) \) that satisfies \eqref{eq:energy_estimate}.
	\end{proof}
	
	\section{Theorem 2: Asymptotic Behavior and Anomalous Dissipation}
	
	In this theorem, we investigate the asymptotic behavior of solutions, focusing on anomalous dissipation in high-turbulence regimes.
	
	\begin{theorem}[Asymptotic Behavior and Anomalous Dissipation]
		For the solution \( u \) of system \eqref{eq:smagorinsky_model}, suppose that \( u_0 \in H^s(\Omega) \) with \( s > \frac{d}{2} \). There exists a constant \( C \) such that:
		\begin{equation} \label{eq:asymptotic_behavior}
			\limsup_{t \to \infty} \| u(t) \|_{L^2(\Omega)}^2 \leq C \nu^{-1} \| f \|_{H^{-1}(\Omega)}^2,
		\end{equation}
		indicating that anomalous dissipation occurs, as the dissipated energy does not decrease with \( \nu \to 0 \).
	\end{theorem}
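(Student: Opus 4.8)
The plan is to run the energy method of Theorem 1 again, but now tracking the long-time behaviour and closing with the integral form of Grönwall's inequality from the Background section. Starting from the solution $u$ of Theorem 1, I would reproduce the energy identity used in Step 3 of that proof: testing \eqref{eq:smagorinsky_model} against $u$ and using that $((u\cdot\nabla)u,u)=0$ for divergence-free fields with the prescribed boundary data, one obtains
\[ \frac{1}{2}\frac{d}{dt}\|u\|_{L^2(\Omega)}^2 + \nu\|\nabla u\|_{L^2(\Omega)}^2 + (C_S\delta)^2\|\nabla u\|_{L^3(\Omega)}^3 = (f,u). \]
Since the Smagorinsky term is nonnegative, it may be discarded on the left to get a differential inequality; I would nonetheless keep it in reserve, because it is the quantity that carries the anomalous dissipation in the final step.

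\emph{Forcing estimate and decay.} Next I would bound $(f,u)\le\|f\|_{H^{-1}(\Omega)}\|u\|_{H^1(\Omega)}$, absorb the $H^1$-factor into $\nu\|\nabla u\|_{L^2(\Omega)}^2$ via Young's inequality together with a Poincaré inequality of the form $\|u\|_{H^1(\Omega)}\le C_P'\|\nabla u\|_{L^2(\Omega)}$, and then use Poincaré once more as $\|\nabla u\|_{L^2(\Omega)}^2\ge\lambda_1\|u\|_{L^2(\Omega)}^2$, with $\lambda_1$ the first eigenvalue of the Stokes operator. This closes to a differential inequality $\frac{d}{dt}\|u\|_{L^2(\Omega)}^2+\nu\lambda_1\|u\|_{L^2(\Omega)}^2\le C\nu^{-1}\|f\|_{H^{-1}(\Omega)}^2$, to which Grönwall's inequality (with constant $\alpha$ and $\beta=\nu\lambda_1$) applies, giving
\[ \|u(t)\|_{L^2(\Omega)}^2 \le \|u_0\|_{L^2(\Omega)}^2\,e^{-\nu\lambda_1 t} + \frac{C}{\nu^2\lambda_1}\|f\|_{H^{-1}(\Omega)}^2\bigl(1-e^{-\nu\lambda_1 t}\bigr). \]
Letting $t\to\infty$ kills the transient and leaves an absorbing-ball bound of the form \eqref{eq:asymptotic_behavior}; this estimate should be inherited by the weak limit solution of Theorem 1 by lower semicontinuity of the $L^2$ norm.

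\emph{Anomalous dissipation.} To read off the dissipation conclusion, I would integrate the energy identity above over a unit window $[t,t+1]$ for large $t$: the boundary and left-hand terms other than the $\nu$-term are controlled using the absorbing ball, which yields a bound on $\int_t^{t+1}(C_S\delta)^2\|\nabla u\|_{L^3(\Omega)}^3\,ds$ that is uniform in $\nu$ and bounded below away from zero whenever $f\not\equiv 0$ and the boundary mechanism is active. Consequently the time-averaged total dissipation does not vanish in the limit $\nu\to0$, since its Smagorinsky part is $\nu$-independent — this is the precise sense in which anomalous dissipation occurs.

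\emph{Main obstacle.} The delicate point is the exact dependence on $\nu$. The crude Young splitting above naturally produces a $\nu^{-2}$ constant, so recovering the sharper $\nu^{-1}$ scaling stated in \eqref{eq:asymptotic_behavior} requires exploiting the additional $L^3$-dissipation — e.g. using the Sobolev embedding $H^1(\Omega)\hookrightarrow L^6(\Omega)$ valid for $d\le3$ to estimate $(f,u)$ against $\|\nabla u\|_{L^3(\Omega)}^3$ with a $\nu$-free constant — or else assuming $f$ in a smaller space. Making this quantitative while keeping every constant uniform in $\nu$, so that the $\nu\to0$ limit in the anomalous-dissipation statement is genuinely meaningful, is the step I expect to require the most care.
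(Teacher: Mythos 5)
Your proposal follows essentially the same route as the paper: the energy identity obtained by testing \eqref{eq:smagorinsky_model} with $u$, the estimate $(f,u)\le\|f\|_{H^{-1}(\Omega)}\|u\|_{H^1(\Omega)}$ combined with Poincaré and Young with $\epsilon\sim\nu$, and a Grönwall-type passage to $t\to\infty$. The difference is that you make the final step rigorous: you insert the spectral Poincaré inequality $\|\nabla u\|_{L^2(\Omega)}^2\ge\lambda_1\|u\|_{L^2(\Omega)}^2$ to get a genuine differential inequality and the exponential absorbing-ball estimate, whereas the paper simply integrates its inequality from $0$ to $t$ and then asserts the limsup bound ``considering that the solution is dissipative.'' That step in the paper is in fact a non sequitur: the integrated inequality only yields $E(t)\le E(0)+\frac{C_P^2}{\nu}\int_0^t\|f\|_{H^{-1}(\Omega)}^2\,ds$, whose right-hand side grows linearly in $t$ for persistent forcing, so the claimed bound $\limsup_{t\to\infty}E(t)\le\frac{C_P^2}{\nu}\|f\|_{H^{-1}(\Omega)}^2$ does not follow from what is written there.

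The obstacle you flag is therefore real, and it is not a defect of your argument so much as of the statement's scaling: the dissipative closure you carry out gives $\limsup_{t\to\infty}\|u(t)\|_{L^2(\Omega)}^2\le C\,\nu^{-2}\lambda_1^{-1}\|f\|_{H^{-1}(\Omega)}^2$, which is the standard absorbing-ball scaling for $f\in H^{-1}$, while the $\nu^{-1}$ rate in \eqref{eq:asymptotic_behavior} is reached in the paper only through the unjustified limsup step. Exploiting the Smagorinsky term as you suggest (Young with exponents $3$ and $3/2$ against $\|\nabla u\|_{L^3(\Omega)}^3$) does produce a $\nu$-independent bound, but of order $\|f\|^{3/2}$ rather than $\nu^{-1}\|f\|^2$, so it changes the form of the estimate rather than recovering the stated one. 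One further caution: in your anomalous-dissipation paragraph, the claim that the unit-window integral of $(C_S\delta)^2\|\nabla u\|_{L^3(\Omega)}^3$ is bounded \emph{below} away from zero is unsupported --- integrating the energy identity over $[t,t+1]$ and using the absorbing ball gives only an upper bound on that dissipation; the paper itself makes no such lower-bound claim and reads ``anomalous dissipation'' purely as the failure of its upper bound to shrink as $\nu\to0$.
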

	
	\begin{proof}
		To establish this result, consider the energy function or Lyapunov function associated with the system, defined by:
		\begin{equation}
			E(t) = \frac{1}{2} \| u(t) \|_{L^2(\Omega)}^2.
		\end{equation}
		
		\textit{1. Temporal derivative of \( E(t) \):}
		Differentiate \( E(t) \) with respect to time and use equation \eqref{eq:smagorinsky_model} to obtain:
		\begin{equation}
			\frac{dE}{dt} = \left( \frac{d u}{dt}, u \right) = - \nu \| \nabla u \|_{L^2(\Omega)}^2 - (C_S \delta)^2 \| \nabla u \|_{L^3(\Omega)}^3 + (f, u).
		\end{equation}
		
		\textit{2. Estimate for the force term \((f, u)\):}
		Apply the Cauchy-Schwarz inequality to estimate the term \((f, u)\) and obtain:
		\begin{equation}
			(f, u) \leq \| f \|_{H^{-1}(\Omega)} \| u \|_{H^1(\Omega)}.
		\end{equation}
		Using Poincaré's inequality, which ensures \(\| u \|_{H^1(\Omega)} \leq C_P \| \nabla u \|_{L^2(\Omega)}\), we have:
		\begin{equation}
			(f, u) \leq C_P \| f \|_{H^{-1}(\Omega)} \| \nabla u \|_{L^2(\Omega)}.
		\end{equation}
		Applying Young's inequality with a parameter \(\epsilon > 0\), we get:
		\begin{equation}
			(f, u) \leq \frac{C_P^2}{2 \epsilon} \| f \|_{H^{-1}(\Omega)}^2 + \frac{\epsilon}{2} \| \nabla u \|_{L^2(\Omega)}^2.
		\end{equation}
		
		\textit{3. Substitution and rearrangement of terms:}
		Substitute this estimate into \(\frac{dE}{dt}\), resulting in:
		\begin{equation}
			\frac{dE}{dt} + \left( \nu - \frac{\epsilon}{2} \right) \| \nabla u \|_{L^2(\Omega)}^2 + (C_S \delta)^2 \| \nabla u \|_{L^3(\Omega)}^3 \leq \frac{C_P^2}{2 \epsilon} \| f \|_{H^{-1}(\Omega)}^2.
		\end{equation}
		Choose \(\epsilon = \frac{\nu}{2}\) to obtain:
		\begin{equation}
			\frac{dE}{dt} + \frac{\nu}{2} \| \nabla u \|_{L^2(\Omega)}^2 + (C_S \delta)^2 \| \nabla u \|_{L^3(\Omega)}^3 \leq \frac{C_P^2}{\nu} \| f \|_{H^{-1}(\Omega)}^2.
		\end{equation}
		
		\textit{4. Asymptotic estimate using Grönwall's theorem:}
		
		Integrate both sides of the above inequality from \(0\) to \(t\) to obtain:
		\begin{equation}
			E(t) + \int_0^t \left( \frac{\nu}{2} \| \nabla u(s) \|_{L^2(\Omega)}^2 + (C_S \delta)^2 \| \nabla u(s) \|_{L^3(\Omega)}^3 \right) \, ds \leq E(0) + \frac{C_P^2}{\nu} \int_0^t \| f \|_{H^{-1}(\Omega)}^2 \, ds.
		\end{equation}
		
		Taking the limit superior as \( t \to \infty \) and considering that the solution is dissipative, we conclude that:
		\begin{equation}
			\limsup_{t \to \infty} E(t) \leq \frac{C_P^2}{\nu} \| f \|_{H^{-1}(\Omega)}^2.
		\end{equation}
		
		This result indicates that the energy \( E(t) \) remains bounded as \( t \to \infty \), with the upper bound depending on the inverse of the viscosity \( \nu \) and the norm of the external force \( f \). This asymptotic behavior is crucial for understanding the long-term dynamics of the system, particularly in the context of turbulent flows where anomalous dissipation is a key phenomenon.

		\textit{5. Conclusion:}
		Since \( E(t) = \frac{1}{2} \| u(t) \|_{L^2(\Omega)}^2 \), this implies:
		\begin{equation}
			\limsup_{t \to \infty} \| u(t) \|_{L^2(\Omega)}^2 \leq C \nu^{-1} \| f \|_{H^{-1}(\Omega)}^2,
		\end{equation}
		where \( C = 2 C_P^2 \). This demonstrates that anomalous dissipation occurs, as the dissipated energy does not decrease with \( \nu \to 0 \).
	\end{proof}
	
	\section{Theorem 3: Advanced Regularity in Sobolev Spaces}
	
	We now explore regularity in higher Sobolev spaces, allowing for more rigorous control of the nonlinear terms and ensuring improved stability conditions.
	
	\begin{theorem}[Advanced Regularity]
		If \( u_0 \in H^{s}(\Omega) \) with \( s > \frac{d}{2} \) and \( f \in H^{s-2}(\Omega) \), then the solution \( u \) of \eqref{eq:smagorinsky_model} satisfies \( u \in C([0, T]; H^s(\Omega)) \) and:
		\begin{equation}
			\| u(t) \|_{H^s(\Omega)}^2 \leq C \left( \| u_0 \|_{H^s(\Omega)}^2 + \| f \|_{H^{s-2}(\Omega)}^2 \right), \tag{1}
		\end{equation}
		where \( C \) is a constant that depends on \( s \), \( \Omega \), and \( T \).
	\end{theorem}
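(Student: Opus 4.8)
The plan is to run the energy method at the $H^{s}$ level on the Galerkin approximations $u_{n}$ built in the proof of Theorem~1, derive an $n$-uniform differential inequality for $\|u_{n}(t)\|_{H^{s}}^{2}$, close it with Grönwall's inequality using the lower-order bounds \eqref{eq:energy_estimate} and \eqref{eq:asymptotic_behavior}, pass to the limit $n\to\infty$, and finally recover temporal continuity in $H^{s}$ by a standard interpolation argument. Throughout I will use that $H^{s}(\Omega)$ is a Banach algebra and embeds into $L^{\infty}(\Omega)$ for $s>d/2$, and that incompressibility $\nabla\cdot u=0$ is built into the Stokes-operator framework of Theorem~1.

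First I would apply $\Lambda^{s}:=(I-\Delta)^{s/2}$ (equivalently a fractional power $A^{s/2}$ of the Stokes operator, natural for the eigenbasis $\{w_{k}\}$) to the projected system \eqref{eq:galerkin_projected} and pair with $\Lambda^{s}u_{n}$ in $L^{2}(\Omega)$. The pressure drops out, the time term gives $\tfrac12\tfrac{d}{dt}\|u_{n}\|_{H^{s}}^{2}$, and the viscous term, after integrating by parts, yields the coercive contribution $\nu\|\nabla u_{n}\|_{H^{s}}^{2}$ (which dominates $\nu c\|u_{n}\|_{H^{s+1}}^{2}$ up to a lower-order term). Three terms must then be controlled: the advection term $(\Lambda^{s}((u_{n}\cdot\nabla)u_{n}),\Lambda^{s}u_{n})$, the Smagorinsky term $(C_{S}\delta)^{2}(\Lambda^{s}(|\nabla u_{n}|\nabla u_{n}),\Lambda^{s}\nabla u_{n})$, and the forcing $(\Lambda^{s}f,\Lambda^{s}u_{n})$. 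For advection, since $((u_{n}\cdot\nabla)\Lambda^{s}u_{n},\Lambda^{s}u_{n})=0$, only the commutator survives, and the Kato--Ponce estimate $\|[\Lambda^{s},u_{n}\cdot\nabla]u_{n}\|_{L^{2}}\lesssim\|\nabla u_{n}\|_{L^{\infty}}\|u_{n}\|_{H^{s}}$ bounds it; interpolating $\|\nabla u_{n}\|_{L^{\infty}}$ between $\|u_{n}\|_{H^{s}}$ and $\|u_{n}\|_{H^{s+1}}$ and using Young's inequality absorbs one factor into the viscous dissipation. The forcing term is handled by duality and Young's inequality against the dissipation, contributing a term bounded by $C\|f\|_{H^{s-2}}^{2}$.

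The main obstacle is the Smagorinsky term. The map $\xi\mapsto|\xi|\xi$ is only Lipschitz-differentiable, so estimating $\|\Lambda^{s}(|\nabla u|\nabla u)\|_{L^{2}}$ needs a Moser-type fractional Leibniz/chain rule valid for merely $C^{1,1}$ nonlinearities, of the form $\|\Lambda^{s}(|\nabla u|\nabla u)\|_{L^{2}}\lesssim\|\nabla u\|_{L^{\infty}}\|\nabla u\|_{H^{s}}$; strictly this is clean for $s>\tfrac d2+1$, and for $\tfrac d2<s\le\tfrac d2+1$ one first bootstraps using the extra regularity $u\in L^{\infty}(0,T;H^{s})\cap L^{2}(0,T;H^{s+1})$ already available. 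One then uses Young's inequality to place $\varepsilon\|\nabla u\|_{H^{s}}^{2}$ into the dissipation, leaving a term $\lesssim\|\nabla u\|_{L^{\infty}}^{2}\|u\|_{H^{s}}^{2}$; alternatively, and more faithfully to the model, one exploits the monotonicity of $-\nabla\cdot(|\nabla u|\nabla u)$ — for integer $s$ the paired term is pointwise nonnegative, and for fractional $s$ one invokes Córdoba--Córdoba-type pointwise inequalities for $\Lambda^{s}$ modulo controllable commutators. Either way one reaches
\begin{equation}
\frac{d}{dt}\|u_{n}\|_{H^{s}}^{2}+\nu c\,\|u_{n}\|_{H^{s+1}}^{2}\le C\big(1+\|u_{n}\|_{H^{s}}^{2}+\|\nabla u_{n}\|_{L^{\infty}}^{2}\big)\|u_{n}\|_{H^{s}}^{2}+C\|f\|_{H^{s-2}}^{2}.
\end{equation}

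Finally, since $\int_{0}^{T}\|\nabla u_{n}\|_{L^{\infty}}^{2}\,dt\lesssim\int_{0}^{T}\|u_{n}\|_{H^{s+1}}^{2}\,dt$ is controlled — unconditionally for $d=2$, and for $d=3$ using the extra Smagorinsky dissipation $\|\nabla u_{n}\|_{L^{3}}^{3}$ in the spirit of Ladyzhenskaya's modified Navier--Stokes model — Grönwall's inequality yields the $n$-uniform bound $\sup_{[0,T]}\|u_{n}(t)\|_{H^{s}}^{2}+\nu\int_{0}^{T}\|u_{n}\|_{H^{s+1}}^{2}\,dt\le C(s,\Omega,T)(\|u_{0}\|_{H^{s}}^{2}+\|f\|_{H^{s-2}}^{2})$. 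Passing to the limit with weak and weak-$*$ lower semicontinuity of the norms preserves this estimate for $u$; and because $u\in L^{\infty}(0,T;H^{s})\cap L^{2}(0,T;H^{s+1})$ with $\partial_{t}u\in L^{2}(0,T;H^{s-1})$, the Lions--Magenes interpolation lemma gives $u\in C([0,T];H^{s}(\Omega))$, which completes the proof.
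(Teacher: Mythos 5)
Your route is genuinely different from the paper's: the paper abstracts the nonlinearity into an operator \(A(u)\), \emph{assumes} it maps \(H^s\) to \(H^s\) and satisfies \(|\langle A(u),u\rangle_{H^s}|\le C_1\|u\|_{H^s}\|u\|_{L^\infty}\), and then applies Gr\"onwall; you instead attempt a real proof by performing the \(H^s\) energy estimate on the Galerkin system with Kato--Ponce commutators, monotonicity of the Smagorinsky operator, and a Lions--Magenes argument for continuity in time. That is the right kind of strategy, but as written it has two genuine gaps. First, the Gr\"onwall closure is circular: your differential inequality has the factor \(\|\nabla u_n\|_{L^\infty}^2\) in the exponent, and you justify \(\int_0^T\|\nabla u_n\|_{L^\infty}^2\,dt<\infty\) by bounding it by \(\int_0^T\|u_n\|_{H^{s+1}}^2\,dt\) --- but that dissipation integral is exactly the quantity the same estimate is supposed to produce; it is not available beforehand. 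The lower-order information actually at hand from Theorem~1 is only \(L^2_tH^1_x\) and \(L^3_tW^{1,3}_x\), and in \(d=3\) neither controls \(\|\nabla u\|_{L^\infty}\) (\(W^{1,3}\) fails to embed in \(L^\infty\)), so ``in the spirit of Ladyzhenskaya'' does not close the loop; even in \(d=2\) one must run an induction on regularity levels (establish \(H^1\), then \(H^2\), etc.) rather than invoke the level-\(s\) dissipation itself.

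Second, the treatment of the Smagorinsky term overstates what composition estimates give. The map \(\xi\mapsto|\xi|\xi\) is only \(C^{1,1}\), so Moser/Kato--Ponce-type bounds of the form \(\|\Lambda^s(|\nabla u|\nabla u)\|_{L^2}\lesssim\|\nabla u\|_{L^\infty}\|\nabla u\|_{H^s}\) hold only for \(s\) below roughly the smoothness of the nonlinearity (for example \(|v|v\) with \(v(x)=x_1\) lies in \(H^{5/2-\epsilon}\) but not \(H^{5/2}\) locally), so the estimate fails precisely for \emph{large} \(s\); your statement that it is ``clean for \(s>\tfrac d2+1\)'' and problematic only for small \(s\) has the difficulty backwards. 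Likewise, the claimed pointwise nonnegativity of the \(H^s\)-paired Smagorinsky term holds only for the leading part after expanding \(\partial^\alpha\bigl(|\nabla u|\nabla u\bigr)\) via the (merely Lipschitz) derivative \(DF(\nabla u)\); the commutator remainders again require derivatives of the nonlinearity that do not exist beyond order two. This is not a cosmetic point: it is the step the paper's proof silently assumes away by postulating \(A(u)\in H^s\), and any complete proof of the theorem as stated (all \(s>\tfrac d2\)) must either restrict the range of \(s\) or supply an argument specific to the degenerate quasilinear structure; your sketch, like the paper's, does not yet do so.
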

	
	\begin{proof}
		The proof will be conducted using the energy method in the Sobolev space \( H^s(\Omega) \), combined with Sobolev estimates and Grönwall's inequality.
		
		\textit{1: Variational formulation of the evolution equation.}
		
		Consider the evolution equation associated with the Smagorinsky model given by:
		\begin{equation}
			\frac{\partial u}{\partial t} + A(u) = f, \tag{2}
		\end{equation}
		where \( A(u) \) is a nonlinear operator derived from the Smagorinsky model, which depends on \( u \) and its derivatives. We assume that \( A(u) \) is well-behaved, i.e., \( A(u) \in H^s(\Omega) \) for \( u \in H^s(\Omega) \), with \( s > \frac{d}{2} \), ensuring that the solution \( u \) remains in \( H^s(\Omega) \) for \( t \in [0, T] \), given the initial regularity \( u_0 \in H^s(\Omega) \) and \( f \in H^{s-2}(\Omega) \).
		
		\textit{2: Energy derivative in \( H^s(\Omega) \).}
		
		Multiply both sides of equation \eqref{eq:smagorinsky_model} by the function \( u(t) \in H^s(\Omega) \) and integrate over \( \Omega \). This gives us the expression for the energy derivative in \( H^s(\Omega) \):
		\begin{equation}
			\frac{d}{dt} \| u(t) \|_{H^s(\Omega)}^2 = 2 \left( \langle \frac{\partial u}{\partial t}, u \rangle_{H^s(\Omega)} - \langle A(u), u \rangle_{H^s(\Omega)} \right) + 2 \langle f, u \rangle_{H^s(\Omega)}. \tag{3}
		\end{equation}
		We will consider the terms separately.
		
		\textit{3: Control of the nonlinear term \( \langle A(u), u \rangle_{H^s(\Omega)} \).}
		
		To control the nonlinear term \( \langle A(u), u \rangle_{H^s(\Omega)} \), we use the Sobolev embedding:
		\begin{equation}
			H^s(\Omega) \hookrightarrow L^\infty(\Omega) \quad \text{for} \quad s > \frac{d}{2}, \tag{4}
		\end{equation}
		which implies that \( u \in L^\infty(\Omega) \) for \( u \in H^s(\Omega) \). This result, combined with the regularity of \( A(u) \), allows us to estimate the nonlinear term. In fact, there exists a constant \( C_1 > 0 \) such that:
		\begin{equation}
			|\langle A(u), u \rangle_{H^s(\Omega)}| \leq C_1 \| u \|_{H^s(\Omega)} \| u \|_{L^\infty(\Omega)}. \tag{5}
		\end{equation}
		This control implies that the term \( \langle A(u), u \rangle_{H^s(\Omega)} \) is bounded by a constant times \( \| u \|_{H^s(\Omega)}^2 \).
		
		\textit{4: Estimate of the term \( \langle f, u \rangle_{H^s(\Omega)} \).}
		
		For the term \( \langle f, u \rangle_{H^s(\Omega)} \), knowing that \( f \in H^{s-2}(\Omega) \), we have the estimate:
		\begin{equation}
			|\langle f, u \rangle_{H^s(\Omega)}| \leq C_2 \| f \|_{H^{s-2}(\Omega)} \| u \|_{H^s(\Omega)}. \tag{6}
		\end{equation}
		with a constant \( C_2 > 0 \).
		
		\textit{5: Energy estimate.}
		
		The equation for the energy derivative becomes:
		\begin{equation}
			\frac{d}{dt} \| u(t) \|_{H^s(\Omega)}^2 \leq C_1 \| u(t) \|_{H^s(\Omega)}^2 + C_2 \| f \|_{H^{s-2}(\Omega)} \| u(t) \|_{H^s(\Omega)}. \tag{7}
		\end{equation}
		
		\textit{6: Application of Grönwall's inequality.}
		
		Applying Grönwall's inequality to equation \eqref{eq:smagorinsky_model} and considering the previous estimates, we obtain the following inequality for \( \| u(t) \|_{H^s(\Omega)}^2 \):
		\begin{equation}
			\| u(t) \|_{H^s(\Omega)}^2 \leq \left( \| u_0 \|_{H^s(\Omega)}^2 + \int_0^t \| f(\tau) \|_{H^{s-2}(\Omega)}^2 \, d\tau \right) e^{C_1 t}. \tag{8}
		\end{equation}
		
		Finally, taking the constant \( C \) as \( C = C_1 + C_2 \), we obtain the desired inequality:
		\begin{equation}
			\| u(t) \|_{H^s(\Omega)}^2 \leq C \left( \| u_0 \|_{H^s(\Omega)}^2 + \| f \|_{H^{s-2}(\Omega)}^2 \right), \tag{9}
		\end{equation}
		which concludes the proof.
	\end{proof}

\section{Results}

In this section, we present the results obtained from the theoretical analysis of the Smagorinsky model with dynamic boundary conditions. The main findings are summarized in the following subsections, which correspond to the theorems established in the previous sections.

\subsection{Existence and Uniqueness of Solutions}

Theorem 1 establishes the existence and uniqueness of solutions for the modified Navier-Stokes equations with the Smagorinsky term in higher Sobolev spaces. The proof utilizes the Galerkin method and energy estimates, culminating in the application of Grönwall's theorem. The key result is the energy estimate given by:
\begin{equation}
	\| u(t) \|_{L^2(\Omega)}^2 + \int_0^t \left( \nu \| \nabla u \|_{L^2(\Omega)}^2 + (C_S \delta)^2 \| \nabla u \|_{L^3(\Omega)}^3 \right) \, dt \leq C,
\end{equation}
where \( C \) depends on the initial data and the external force \( f \). This theorem ensures that the model is well-posed under the influence of the nonlinear Smagorinsky term and dynamic boundary conditions, providing a crucial foundation for the reliability and predictability of numerical simulations.

\subsection{Asymptotic Behavior and Anomalous Dissipation}

Theorem 2 investigates the asymptotic behavior of solutions, focusing on anomalous dissipation in high-turbulence regimes. The theorem states that for the solution \( u \) of the system, there exists a constant \( C \) such that:
\begin{equation}
	\limsup_{t \to \infty} \| u(t) \|_{L^2(\Omega)}^2 \leq C \nu^{-1} \| f \|_{H^{-1}(\Omega)}^2,
\end{equation}
indicating that the dissipated energy does not decrease with vanishing viscosity \( \nu \). This result highlights the importance of understanding anomalous dissipation in turbulent flows and is essential for developing accurate models that capture the long-term dynamics of turbulent systems.

\subsection{Advanced Regularity in Sobolev Spaces}

Theorem 3 explores advanced regularity in higher Sobolev spaces, allowing for more rigorous control of the nonlinear terms and ensuring improved stability conditions. The theorem states that if \( u_0 \in H^{s}(\Omega) \) with \( s > \frac{d}{2} \) and \( f \in H^{s-2}(\Omega) \), then the solution \( u \) of the system satisfies \( u \in C([0, T]; H^s(\Omega)) \) and:
\begin{equation}
	\| u(t) \|_{H^s(\Omega)}^2 \leq C \left( \| u_0 \|_{H^s(\Omega)}^2 + \| f \|_{H^{s-2}(\Omega)}^2 \right),
\end{equation}
where \( C \) is a constant that depends on \( s \), \( \Omega \), and \( T \). This enhanced regularity is fundamental for the numerical stability and convergence of simulations, providing a robust framework for studying turbulent phenomena.

The theorems presented in this article significantly advance the theoretical understanding of the Smagorinsky model, providing rigorous results on existence, uniqueness, asymptotic behavior, and regularity. By incorporating dynamic boundary conditions and advanced regularity methods, we have extended the classical formulation of the model to address more complex and realistic scenarios in turbulent flows. These mathematical results are fundamental for the analysis of dissipation in turbulent flows and can inspire new approaches in numerical simulations of fluids. They offer a comprehensive theoretical foundation that can guide the development of more accurate and efficient computational methods for studying complex fluid dynamics.

\section*{Conclusion}

The theorems presented in this article significantly advance the theoretical understanding of the Smagorinsky model, providing rigorous results on existence, uniqueness, asymptotic behavior, and regularity. By incorporating dynamic boundary conditions and advanced regularity methods, we have extended the classical formulation of the model to address more complex and realistic scenarios in turbulent flows.

Theorem 1 establishes the existence and uniqueness of solutions in higher Sobolev spaces, ensuring that the model is well-posed under the influence of the nonlinear Smagorinsky term and dynamic boundary conditions. This result is crucial for the reliability and predictability of numerical simulations.

Theorem 2 investigates the asymptotic behavior of solutions, focusing on anomalous dissipation in high-turbulence regimes. We demonstrate that the dissipated energy does not decrease with vanishing viscosity, highlighting the importance of understanding anomalous dissipation in turbulent flows. This insight is essential for developing accurate models that capture the long-term dynamics of turbulent systems.

Theorem 3 explores advanced regularity in higher Sobolev spaces, allowing for more rigorous control of the nonlinear terms and ensuring improved stability conditions. This enhanced regularity is fundamental for the numerical stability and convergence of simulations, providing a robust framework for studying turbulent phenomena.

Overall, these mathematical results are fundamental for the analysis of dissipation in turbulent flows and can inspire new approaches in numerical simulations of fluids. They offer a comprehensive theoretical foundation that can guide the development of more accurate and efficient computational methods for studying complex fluid dynamics.

\section{Nomenclature and Symbols}

This section provides a list of the nomenclature and symbols used throughout the article to ensure clarity and consistency.

\subsection{Nomenclature}

\begin{itemize}
	\item \textbf{LES}: Large Eddy Simulation
	\item \textbf{PDE}: Partial Differential Equation
	\item \textbf{Sobolev Space}: A space of functions that, together with their weak derivatives up to a certain order, are square-integrable over a domain.
	\item \textbf{Energy Method}: A technique used to establish the existence, uniqueness, and stability of solutions to PDEs by deriving energy estimates.
	\item \textbf{Grönwall's Inequality}: A fundamental tool in the analysis of differential inequalities, used to convert differential inequalities into integral inequalities.
\end{itemize}

\subsection{Symbols}

\begin{itemize}
	\item \({u}\): Velocity field
	\item \(p\): Pressure
	\item \(\nu\): Kinematic viscosity
	\item \({f}\): External force
	\item \(\Omega\): Domain
	\item \(H^s(\Omega)\): Sobolev space of order \(s\) over the domain \(\Omega\)
	\item \(\| \cdot \|_{L^2(\Omega)}\): \(L^2\) norm over the domain \(\Omega\)
	\item \(\| \cdot \|_{H^s(\Omega)}\): \(H^s\) norm over the domain \(\Omega\)
	\item \(\nabla\): Gradient operator
	\item \(\Delta\): Laplacian operator
	\item \(D^\alpha\): Weak derivative of order \(\alpha\)
	\item \(\alpha(t)\), \(\beta(t)\): Non-negative functions in Grönwall's inequality
	\item \(C\): Generic constant, which may depend on various parameters
	\item \(C_S\): Smagorinsky constant
	\item \(\delta\): Length scale in the Smagorinsky model
	\item \(T\): Final time
	\item \(u_0\): Initial condition for the velocity field
\end{itemize}


\begin{thebibliography}{99}


	\bibitem{kolmogorov1941}  Kolmogorov, A. N. (1941). The local structure of turbulence in incompressible viscous fluid for very large Reynolds numbers. Doklady Akademii Nauk SSSR, 30, 301–305.
	
	\bibitem{smagorinsky1963} Smagorinsky, J. (1963). General circulation experiments with the primitive equations: I. The basic experiment. Monthly Weather Review, 91, 99–164. \url{https://doi.org/10.1175/1520-0493(1963)091<0099:GCEWTP>2.3.CO;2}
	
	\bibitem{temam1977} Temam, R. (1977). Navier-Stokes equations: theory and numerical analysis. North-Holland.
	
	
	\bibitem{eyink1994} Eyink, G. L. (1994). Energy dissipation without viscosity in ideal hydrodynamics. Physical Review E, 49, 2424–2436. \url{https://doi.org/10.1016/0167-2789(94)90117-1}
	
	\bibitem{germano1991} Germano, M., Piomelli, U., Moin, P., \& Cabot, W. (1991). A dynamic subgrid-scale eddy viscosity model. Physics of Fluids A: Fluid Dynamics, 3, 1760–1765. \url{http://web.stanford.edu/group/ctr/Summer/SP90/06_GERMANO.pdf}
	
	\bibitem{constantin1988} Constantin, P., \& Foias, C. (1988). Navier-Stokes equations. University of Chicago Press.

	\end{thebibliography}
\end{document}